\documentclass[twoside,11pt]{amsart} 
\usepackage{enumitem}
\usepackage{epsfig}
\usepackage{latexsym} 
\usepackage{amsmath} \usepackage{amssymb}
\usepackage{amsfonts,amssymb,amsmath,amscd,amsthm}
\usepackage{epsfig}
\usepackage[dvipsnames]{xcolor}
\usepackage{placeins}   

 \keywords{ primes, gaps, prime constellations, Eratosthenes sieve}

\subjclass{11N05, 11A41, 11A07}

\makeatother

\newtheorem{lemma}{Lemma}

\theoremstyle{definition}

\newcommand{\pgap}   {{\mathcal G}}

\newcommand{\lil}   {\scriptstyle }

\begin{document}

\title[On nonconvex constellations among primes I]{On nonconvex constellations among primes I }

\date{9 Apr 2026 - completed calculations in Table~\ref{GammaTbl}}

\author{Fred B. Holt}
\address{\tt fbholt@primegaps.info, www.github.com/fbholt/Primegaps-v2}
																														
\begin{abstract}
Extending our work on the $k$-tuple conjecture, we apply those methods to the Engelsma counterexamples (narrow constellations)
of length $J=459$ and span $|s|=3242$.
We track the evolution of these $58$ counterexamples from inadmissible driving terms starting in the cycle of gaps $\pgap(11^\#)$
up through their first appearance in $\pgap(113^\#)$.  We continue developing primorial coordinates for each admissible instance
through a breadth-first exhaustive search through $\pgap(211^\#)$, at which point we need to develop strategies for depth-first searches
for an instance that would survive Eratosthenes sieve.  Our calculations show that {\em none} of the $(459,3242)$-counterexamples
occur before $9.7\,E73$.

For each of the $58$ Engelsma $(459,3242)$-counterexamples we calculate its asymptotic relative population, among other constellations
of length $J=459$, and we study how these counterexamples work.
\end{abstract}

\maketitle
\pagestyle{myheadings}
\thispagestyle{empty}
\baselineskip=12.875pt
\vskip 30pt

\section{Introduction}
This is a specific extension of our recent work on the $k$-tuple conjecture \cite{FBHktuple}.  We have shown that all admissible constellations
arise and persist in the cycles of gaps $\pgap(p^\#)$, and that the population of {\em every} admissible constellation of length $J$ ultimately grows
as ${\Theta(\prod_{p > J+1}(p-J-1))}$.

We apply the tools and methods in that paper \cite{FBHktuple} to the class of nonconvex constellations.
An admissible constellation $s$ of length $J$ is said to be {\em nonconvex} iff 
$$ \pi(|s|) < J.$$
That is, the number of small primes in the interval $(0,|s|]$ is less than the number of gaps in this constellation.  For an occurrence of
$s$ among primes, starting at the prime $x$ we would have
\begin{eqnarray*}
\pi(x+|s|) & = & \pi([0,x]) \; + \; \pi((x,x+|s|]) \\
  & = & \pi(x) \; + \; J \\
  & > & \pi(x) \; + \; \pi(|s|)
\end{eqnarray*}

\begin{figure}[hbt]
\centering
\includegraphics[width=4in]{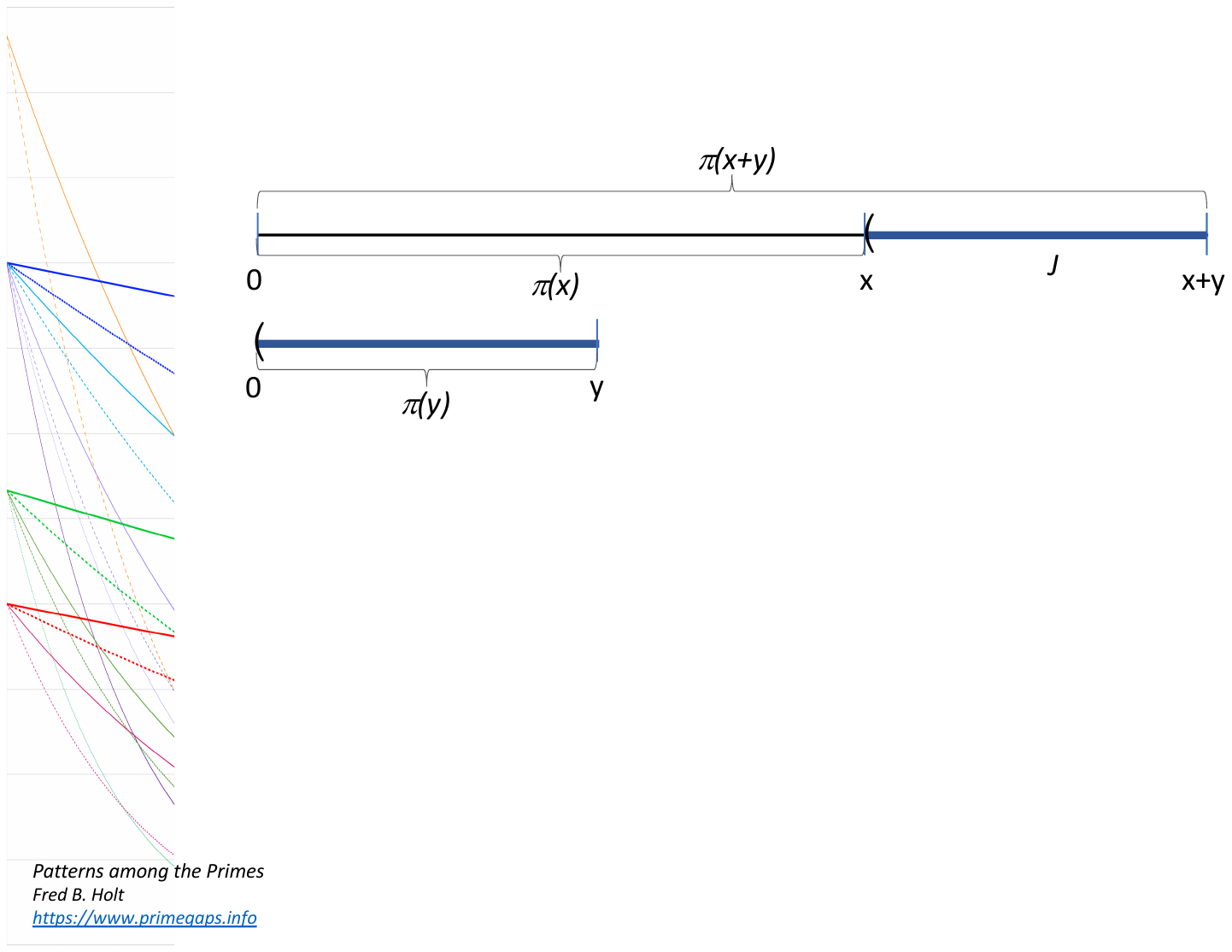}
\caption{\label{ConvexFig} Convexity and nonconvexity.  We are comparing the density of primes in an interval away from the origin, with the
density of an interval of equal length starting at the origin. }
\end{figure}

In 1974 Hensley and Richards \cite{HRich,Rich} showed that nonconvex constellations exist, and in 2005 Engelsma and others \cite{Engtbl, Sutherland} identified numerous examples.  No actual instance of a nonconvex constellation among primes has yet been discovered.
That is, nonconvex admissible constellations $s$ have been identified, but we do not yet know of a specific instance $\gamma_0$
where $s$ occurs among primes.

Here we study a few of the smaller nonconvex constellations identified by Engelsma.
\begin{itemize}
\item We propose a correction to the indexing and scoring of narrow constellations.
\item Under the revised scoring, the smallest nonconvex constellations are 
$${(J,|s|) = (458,3240) \; {\rm and} \; (459,3242)}$$
\item We explore the primorial coordinates for instances of the $(459,3242)$-counterexamples, seeking to confirm an instance that
arises among primes.
\end{itemize}

\subsection{How do these counterexamples work?}
In Figure~\ref{Eng4vpiFig} we plot the constellation among small primes against the constellation $s_4$ from the Engelsma
counterexamples $(459,3242)$.

\begin{figure}[hbt]
\centering
\includegraphics[width=5in]{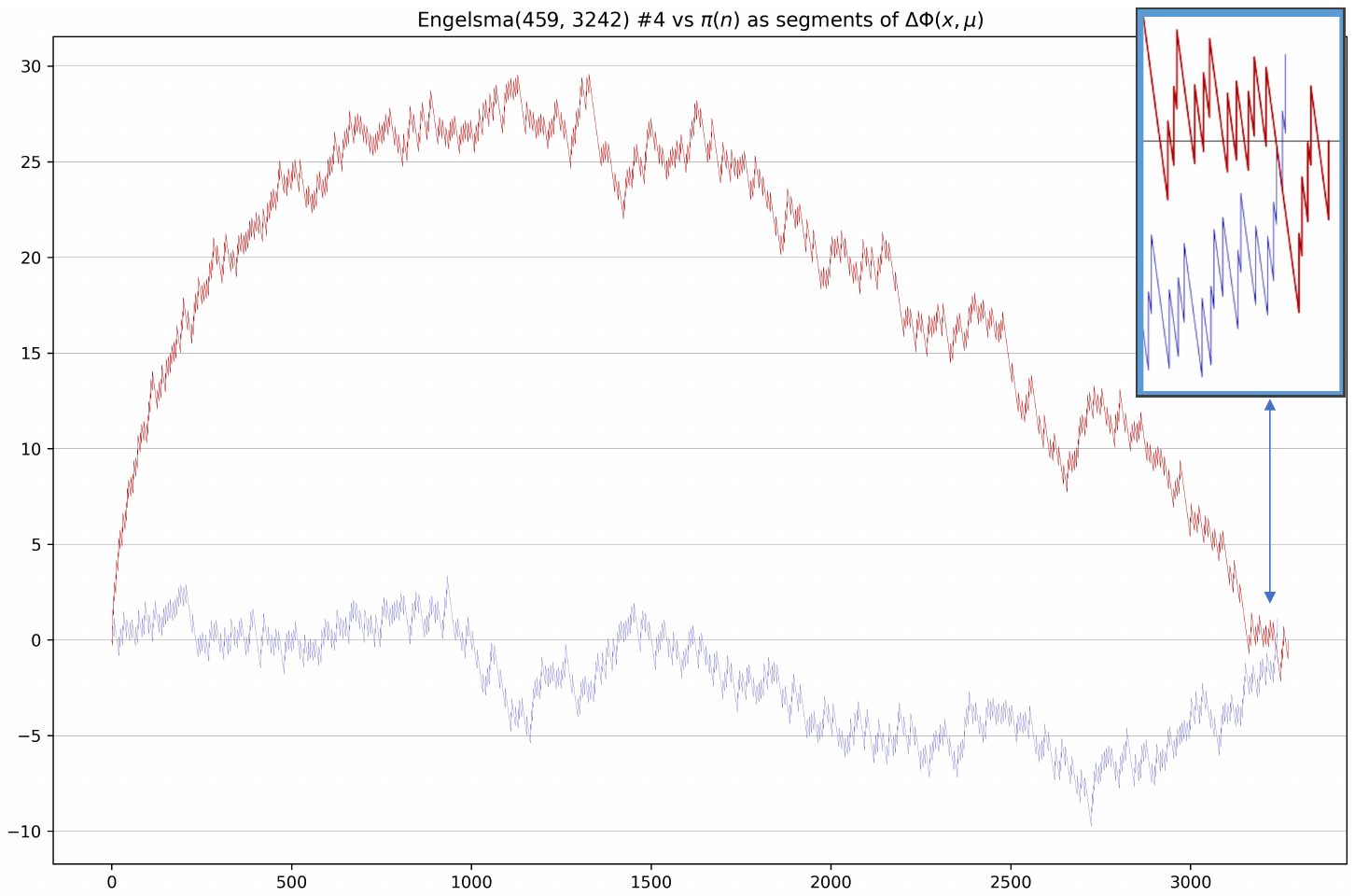}
\caption{\label{Eng4vpiFig} One constellation in $(J,|s|)=(459,3242)$ plotted (in blue) as a segment of $\Delta \Phi(x,\mu)$, compared to the
gaps between primes (in red) from $0$ to $n$. The small gaps in $\pi(x)$ create a huge early lead, but as larger gaps occur between primes,
it gives narrow constellations an opportunity to overtake it.  The inset shows where the graph for $s_4$ surpasses the sequence of prime gaps.
We see that $s_4$ first crosses above the red graph for $(458,3240)$, and stays above for $(459,3242)$.}
\end{figure}

For the graphs we use $\Delta \Phi$ from \cite{FBHprough}. 
$$ \Delta \Phi(x) \; = \; \Phi(x) -\frac{1}{\mu}x $$
where $\Phi(x)$ is the number of gaps in the constellation up to the span $x$.
Here we use the average $\mu = 7.087146$ for the first $459$ prime gaps.  The graph of $\Delta \Phi$ consists of vertical segments
of length $1$ where the next prime (or candidate prime) occurs, followed by downward sloping segments of slope $\frac{-1}{\mu}$.
The run of a downward-sloping segment is the size of the corresponding gap.

Note the role that large gaps play as narrow constellations chase the sequence of initial primes. 
The constellation for the small primes gets a substantial lead, and we can see why -- before the work of Hensley and Richards
\cite{HRich} and the polymath project \cite{Engtbl, Sutherland} -- the convexity conjecture seemed so intuitive.
The constellation of gaps between the $15$ primes in the interval $(0,47]$ is inadmissible and impossibly dense.  In Figure~\ref{Eng4vpiFig}
we see this aggressive lead in $\pi(n)$ continue up through a span of $1000$ or $1200$.

Then as larger gaps occur between the primes, there is an opportunity for admissible constellations to catch up.
Up to $p_{459}=3253$ the small primes have $14$ gaps of size $20$ or more.  See Table~\ref{Engghist}.

The gap of $22$ between
$p_{457}=3229$ and $p_{458}=3251$ finally enables the narrow admissible constellations to produce the first counterexamples.  Then the
jump of $34$ from $(459,3242)$ to $(460,3276)$ cedes this back to convex constellations until $(J,|s|)=(595,4352)$.  
It is interesting that the shortest gap
that could be prepended or appended to a $(459,3242)$-counterexample is $34$.
The unique instance of the constellation $s_4$ in $\pgap(137^\#)$ is preceded by a gap $70$ and followed by a gap of $34$.

\begin{table}[!h]
\centering
\setlength{\tabcolsep}{4pt}
\begin{tabular}{lr|cccccccccccccccccc|} 
 & \multicolumn{19}{c|}{Distribution of $459$ gaps} \\
 $g$ & & $\lil 1$ & $\lil 2$ & $\lil 4$ & $\lil 6$ & $\lil 8$ & $\lil 10$ & $\lil 12$ & $\lil 14$ & $\lil 16$ & $\lil 18$ & $\lil 20$ & $\lil 22$ & $\lil 24$ 
   & $\lil 26$ & $\lil 28$ & $\lil 30$ & $\lil 32$ & $\lil 34$ \\ \hline
$p_{459}$ & $3253$ & $1$ & $86$ & $92$ & $112$ & $44$ & $43$ & $32$ & $18$ & $8$ & $9$ & $3$ & $5$ & $2$ & $2$ & $1$ & & & $1$ \\
$s_4$ & $3242$ & & $85$ & $77$ & $118$ & $44$ & $57$ & $32$ & $18$ & $12$ & $11$ & $2$ & $2$ & $1$ & & &  & & \\ \hline
\end{tabular}
\caption{ \label{Engghist} Populations of the gaps between the primes from $0$ to $p_{459}=3253$, 
and in one $(459,3242)$-counterexample $s_4$.}
\end{table}

\section{Indexing and scoring narrow admissible $k$-tuples}
A narrow admissible $k$-tuple of span $y= \gamma_{k-1}-\gamma_0$ has a natural correspondence to an admissible constellation 
$s$ of length $J=k-1$ and span $|s|=y$.

Where an instance of the $k$-tuple or the constellation $s$ actually occur, the convexity constraint is
$$ \pi(\gamma_0+y) \le \pi(\gamma_0) + \pi(y)$$
A counterexample would contradict this inequality, yielding instead
\begin{eqnarray*}
 \pi(\gamma_0 + y)  & > & \pi(\gamma_0) + \pi(y) \\
 \pi(\gamma_0) + J & > & \pi(\gamma_0) + \pi(y) \\
 {\rm or} \hspace{0.5in} J & > & \pi(y)
 \end{eqnarray*}

We will index the counterexamples by length $J=k-1$ and span ${|s| = \gamma_J-\gamma_0}$, and we will score the nonconvexity of
a constellation by the difference $J-\pi(|s|)$.

Engelsma \cite{Engtbl} uses $k$ and $w=|s|+1$ as indices and calculates a nonconvexity score of $k-\pi(w)$.  
When $w$ is prime $\pi(w)=\pi(|s|)+1$, an occasional difference.  Using $k$ instead of $J$ makes the scores
too high by $1$, except when $w$ is prime.
Sutherland \cite{Sutherland} corrects $w$ to the span $|s|$ but still uses $k$.  So the inflated scores persist.

\begin{figure}[hbt]
\centering
\includegraphics[width=5in]{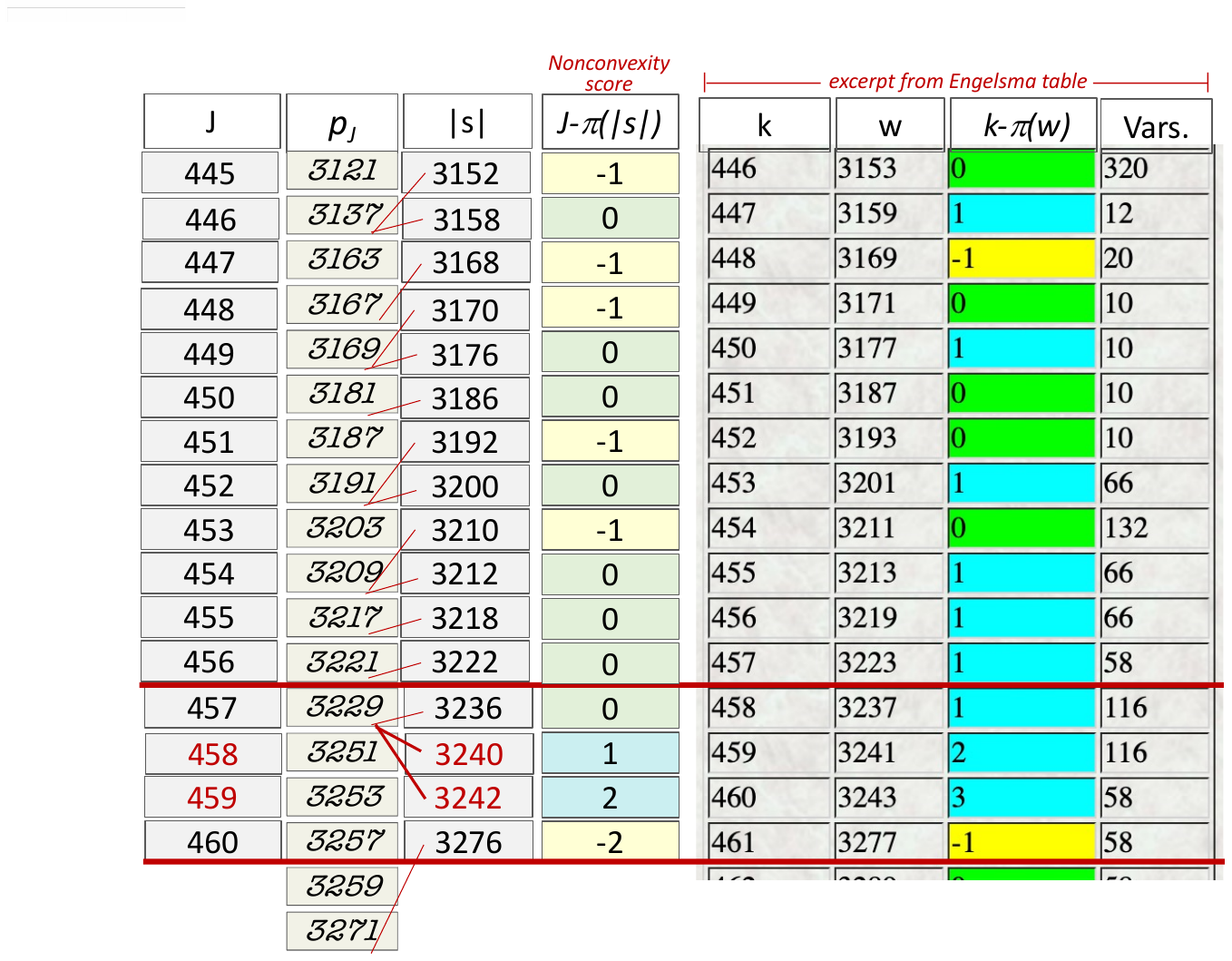}
\caption{\label{EngTblFig} A section of Engelsma's table, shown at right, supplemented with columns for $J = k-1$, span $|s|=w-1$, 
and the true nonconvexity score $J-\pi(|s|)$.  We list the primes $p_J$ in the second column for reference.  The red segments show
where in the sequence of primes the entries $|s|$ fall, illustrating $\pi(|s|)$.}
\end{figure}

The difficulty here is how the left end of the interval from $x$ to $x+y$ is treated.  For example, in Figure~\ref{EngTblFig} 
suppose a constellation ${(J,|s|)=(446,3158)}$ did occur, starting at $\gamma_0$.  This instance has score $k-\pi(w)=1$.
 $\gamma_0$ must be prime, or this is not really an occurrence of $s$.  The count of $x=\gamma_0$ as a prime
either happens in $[0,x]$ or in $[x,x+y]$ but not both.  So we either have
$$
\pi(x+y) = \pi(x-1) + k \hspace{0.2in} {\rm or} \hspace{.2in} \pi(x+y) = \pi(x) + J,
$$
which yield the same total, since $x$ is prime.

For the constellations with $(J,|s|)=(446,3158)$, their instances among primes meet but do not exceed the convexity bound.
\begin{eqnarray*}
\pi(x+3158) & \stackrel{?}{\le} & \pi(x) + \pi(3158) \\
 \pi(x) + 446 & \stackrel{\surd}{=} &  \pi(x) + 446
\end{eqnarray*}

If we were to count the left endpoint of the interval $[x,x+y]$ as a prime twice, 
once in $[0,x]$ and again in $[x,x+y]$, then every twin prime would violate the
convexity conjecture.  For twin primes the $2$-tuple is $[0,2]$.  Under the erroneous scoring, we have two primes covered by the tuple
of span ${|s|=2}$, but ${\pi(2)=1 < k= 2}$, and every twin prime would be a counterexample to the convexity conjecture.

Although the scores $k-\pi(w)$ in the summary tables \cite{Engtbl} are generally padded by $1$, we simply note the correction and make use 
of the excellent underlying data.  In Figure~\ref{EngTblFig} we record the nonconvexity score $J-\pi(|s|)$ in the fourth column.

\begin{lemma}
From the Engelsma data, the smallest counterexamples to the convexity conjecture would occur for $({J,|s|)=(458,3240)}$
and  ${(459,3242)}$.
\end{lemma}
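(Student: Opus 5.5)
The plan is to treat the lemma as a finite verification on Engelsma's table \cite{Engtbl}, translated into the corrected indexing of the previous section. For each width $w$, Engelsma records the largest $k$ for which an admissible $k$-tuple of width $w$ is known. We first convert every entry to $J=k-1$ and $|s|=w-1$. We then compute the true score $J-\pi(|s|)$, as in the fourth column of Figure~\ref{EngTblFig}. The claim is that this score is $\le 0$ for every entry with $|s|<3240$, and that it first becomes positive at $(458,3240)$ and $(459,3242)$.

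\textbf{Positive direction.} We verify the two entries directly. We have $p_{458}=3251$ and $p_{457}=3229$, so every prime up to $3242$ other than $3251$ and beyond is among the first $457$ primes. Hence $\pi(3240)=\pi(3242)=457$. This is exactly the gap of $22$ noted in the introduction. Engelsma's table lists admissible tuples with $k=459$ at $w=3241$ and $k=460$ at $w=3243$. These give $J-\pi(|s|)=458-457=1$ and $459-457=1$, so both are genuine counterexamples under the corrected scoring.

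\textbf{Minimality.} Here we must show that no table entry with $|s|<3240$ has $J>\pi(|s|)$. The approach is to use monotonicity to reduce this to a check at finitely many points.
\begin{itemize}
\item The maximal $J$ for a given span is nondecreasing in $|s|$.
\item $\pi(|s|)$ is a step function that only increases at primes.
\end{itemize}
So it suffices to check, for each consecutive pair of primes $p_n<p_{n+1}\le 3240$, that the maximal tabulated $J$ with $|s|<p_{n+1}$ is at most $n$. Equivalently, we check at the right end of each gap between consecutive primes. For most of the range the small primes are far ahead, as Figure~\ref{Eng4vpiFig} shows, so only the stretches following large prime gaps need careful inspection. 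These are the stretches around the gaps of size $\ge 20$ listed in Table~\ref{Engghist}.

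Two caveats apply. First, at entries such as $(446,3158)$ the score is exactly $0$ after correction, although it showed $1$ before. Second, the table's $k$ values are lower bounds, the best admissible tuples known, not proven maxima. The lemma should therefore be stated, as it is, as ``from the Engelsma data.''

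The main obstacle is bookkeeping rather than theory. Engelsma's scores $k-\pi(w)$ are inflated by $1$ except when $w$ is prime, so every entry he marks with score $1$ below $w=3241$ must be rechecked and shown to drop to $0$. We would do this systematically by recomputing $\pi(w-1)$ and comparing it with $k-1$ across the whole table up to $w=3243$.
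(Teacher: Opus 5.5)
Your proposal is correct and follows essentially the same route as the paper. The paper likewise converts Engelsma's entries to $(J,|s|)=(k-1,w-1)$, recomputes the corrected score $J-\pi(|s|)$ (equivalently, compares $|s|$ with $p_J$ for each $J$), and reads off from Figure~\ref{EngTblFig} that the first positive scores occur at $(458,3240)$ and $(459,3242)$; your monotonicity reduction is just a systematic way of organizing that read-off. One small slip: the score at $(459,3242)$ is $459-457=2$, not $1$, which does not affect your conclusion.
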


See the section of Engelsma's table and supplementary columns in Figure~\ref{EngTblFig}.  
Wherever ${J-\pi(|s|) >0}$ we have admissible constellations
for which an instance $x=\gamma_0$ among primes would initiate a counterexample to the convexity conjecture.  For each $J$, we are looking for 
admissible constellations for which the span $|s| < p_J$; that is, the interval required to cover an additional $J$ primes is narrower than the
first $J$ primes themselves.

We call a constellation $s$ a {\em $(J,|s|)$-counterexample} if $s$ has length $J$ and  span $|s|$ such that 
$$ J > \pi(|s|) \hspace{.2in} {\rm or~equivalently} \hspace{.2in}  |s| < p_J.$$  
To be clear, the constellation $s$ is not itself a counterexample to the convexity conjecture, but any 
instance $\gamma_0$ of $s$ among primes would be.
We have the narrow constellation $s$, but we still need the initial generator $\gamma_0$ for an instance of $s$ among primes.

\section{Engelsma $(459,3242)$-counterexamples}

The last column in Figure~\ref{EngTblFig} lists the number of distinct constellations in that $(J, |s|)$ class.
Engelsma \cite{Engtbl, Sutherland} found $58$ distinct $(459,3242)$-counterexamples.  Sutherland 
has archived this data \cite{Sutherland}.  We study these $(459,3242)$-counterexamples here.

\subsection{Why length $J=459$?}  
Every one of the $58$ $(459,3242)$-counterexamples starts and ends with gaps $g=2$.  
If we drop the last gap $g=2$ from these, we get $58$ $(458,3240)$-counterexamples that begin with a gap $g=2$. 
If instead we drop the first gap $g=2$, we get $58$ $(458,3240)$-counterexamples that end with a gap $g=2$.
Together these are the $116$ $(458,3240)$-counterexamples identified by Engelsma.

We focus on the $(459,3242)$-counterexamples since these contain all of the $(458,3240)$ ones.
Additionally the nonconvexity score $J-\pi(|s|)$ for $(459,3242)$ is
$$459 - \pi(3242) \, = \, 459-457 \, = \, 2.$$

\subsection{Prefixes for the primorial coordinates}
Here we analyze a few characteristics of one of Engelsma's $(459,3242)$-counterexamples.  
In our GitHub repository\footnote{\tt https://github.com/fbholt/Primegaps-v2}
there is a notebook for repeating this analysis for any of these counterexamples.

From Engelsma's table (Figure~\ref{EngTblFig}) we see that there are $58$ different constellations among the
$(459,3242)$-counterexamples.  
When we focus on the details of a specific example, we will work with the $(459,3242)$-constellation of index $\#4$ in the
GitHub data.
We call this constellation $s_4$ and list its gaps in Figure~\ref{Eng459Fig}.
We list the constellation in Figure~\ref{Eng459Fig}, 
highlighted in Figure~\ref{EngPrefixFig}
and graphed versus the constellation among small primes in Figure~\ref{Eng4vpiFig}.

We start in the cycle $\pgap(11^\#)$.  Of the $58$ $(459,3242)$-counterexamples, $29$ share a single
inadmissible driving term in $\pgap(11^\#)$ with $\gamma_0=107$.  The other $29$ are the mirror images of the first $29$, 
and these mirror images share a unique inadmissible driving term in $\pgap(11^\#)$ with $\gamma_0=1271$.

\begin{figure}[hbt]
\centering
\includegraphics[width=5in]{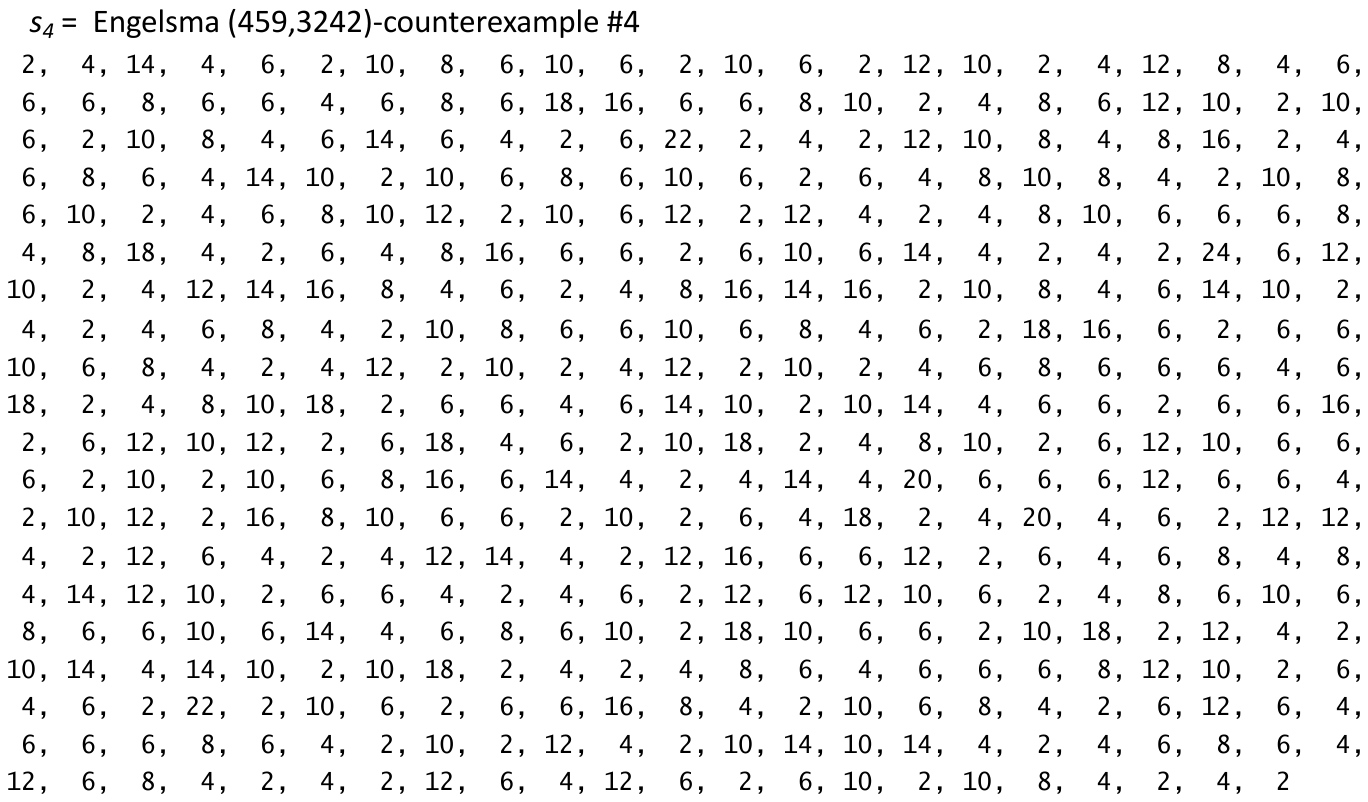}
\caption{\label{Eng459Fig} One example $s_4$ of the $58$ narrow constellations in $(J,|s|)=(459,3242)$.}
\end{figure}

Across several cycles $\pgap(p^\#)$ there continues to be a single driving term for each of these constellations.  We record these
unique prefixes by their primorial coordinates in Figure~\ref{EngPrefixFig}.
Figure~\ref{EngPrefixFig} tabulates the primorial coordinates for the unique prefixes for the first $29$ $(459,3242)$-counterexamples 
through the cycles $\pgap(p^\#)$.  The other $29$ $(459,3242)$-counterexamples are the mirror images of this first set.

For each of these $29$ $(459,3242)$-counterexamples, the constellation itself first occurs in $\pgap(113^\#)$.  
These counterexamples all have unique occurrences into $\pgap(131^\#)$.  Our example $s_4$ has a unique instance in 
$\pgap(137^\#)$ as well.
For $s_4$ its unique occurrence in $\pgap(137^\#)$ has 
$\gamma_0 \approx 2.2802369\;E52$.

These unique driving terms are inadmissible constellations until the counterexamples
themselves appear, in $\pgap(113^\#)$.  These $29$ counterexamples share a single common driving term up into
$\pgap(59^\#)$, at which point this inadmissible driving term has length $J=478$ and initial generator
\begin{eqnarray*}
\gamma_0(59^\#) &= & 107 + 6\cdot 11^\# + 8\cdot 13^\# + 9\cdot 17^\# + 5\cdot 19^\# + 7\cdot 23^\# + 1\cdot 29^\#  \\
 & & \quad + 23\cdot 31^\#  + 38\cdot 37^\# + 34\cdot 41^\# + 46\cdot 43^\# + 20\cdot 47^\# + 13 \cdot 53^\# \\
 & \approx & 4.36569294 \, E20
 \end{eqnarray*}

\begin{figure}[hbt]
\centering
\includegraphics[width=5in]{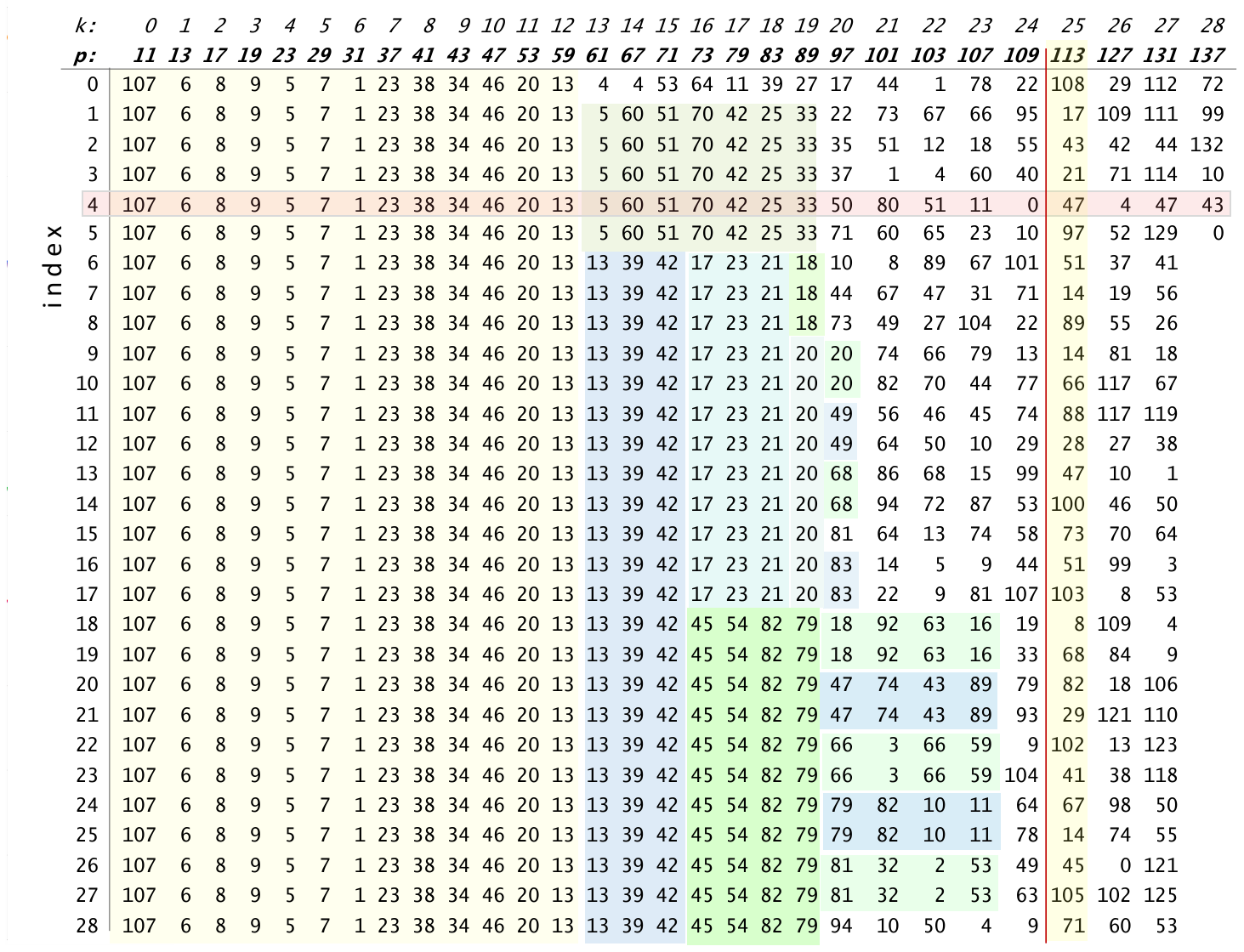}
\caption{\label{EngPrefixFig} Listed here are the primorial coordinates for the unique instances of each of the 29 
$(459,3242)$-counterexamples that have initial generator $\gamma_0=107$ in $\pgap(11^\#)$.  The driving terms
in $\pgap(p^\#)$ for $p < 113$ are inadmissible -- they do not survive.  The counterexamples themselves appear in $\pgap(113^\#)$.
The row of primorial coordinates for the prefix for our example $s_4$ is highlighted.}
\end{figure}

Beyond the unique prefixes listed in Figure~\ref{EngPrefixFig}, the instances of each counterexample branch out into a tree.
The number of nodes at first grows slowly but then explodes, as the product of the number of admissible instances $p-\nu_p$
at each stage of the sieve \cite{FBHktuple}.  Exhaustive breadth-first explorations of the instances of any of the
counterexamples run out very quickly.  

\begin{figure}[hbt]
\centering
\includegraphics[width=5in]{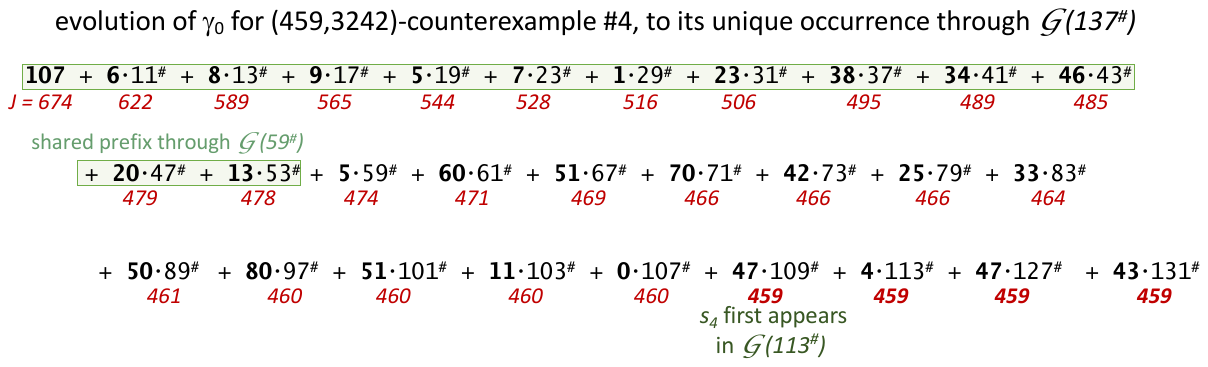}
\caption{ \label{Engprimm} The primorial coordinates for the unique occurrence of the $(459,3242)$-counterexample $s_4$, 
index $4$ in the list  in Figure~\ref{EngPrefixFig}.  The length of the unique occurrence of $s_4$ at each stage of the sieve is shown in red.  
In this evolution the inadmissible driving term in $\pgap(11^\#)$ undergoes interior fusions until it finally produces the admissible constellation 
$s_4$ in $\pgap(113^\#)$. After $\pgap(137^\#)$ there are multiple occurrences of this constellation in $\pgap(p^\#)$.}
\end{figure}

\subsection{Relative populations of $(459,3242)$-counterexamples.}
The $(459,3242)$-counterexamples have no admissible driving terms beyond the constellations themselves.  So their population model
across the cycles of gaps $\pgap(p^\#)$ is very simple.

\begin{lemma}
Let $s$ be one of the $58$ $(459,3242)$-counterexamples.  Then $s$ first appears in the cycle $\pgap(113^\#)$, and has a unique
instance into $\pgap(131^\#)$.  For $p \ge 137$ the population of $s$ in the cycle $\pgap(p^\#)$ is the product 
$$ n_{s,459}(p^\#) \; = \;  \prod_{137 \le q \le p} (q - \nu_q) $$
in which $q-\nu_q$ is the number of admissible residues $\bmod  \, q$ for the initial generator $\gamma_0$ for $s$.
\end{lemma}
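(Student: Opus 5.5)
The plan splits the lemma into a computational part (first appearance and uniqueness through $\pgap(131^\#)$) and a structural part (the product formula for $p\ge 137$). The structural part rests on the population model of \cite{FBHktuple}. There, the instances of a constellation $s$ in $\pgap(q^\#)$ come from instances of $s$ and of its driving terms in $\pgap(p^\#)$ for the preceding prime $p$. Under the recursion $\pgap(p^\#)\to\pgap(q^\#)$, each instance with initial generator $\gamma_0$ produces $q$ copies, with generators $\gamma_0 + j\cdot p^\#$ for $j=0,\dots,q-1$. A copy of $s$ survives exactly when none of its $J+1=460$ candidate primes is removed by the fusions at $q$. This happens exactly when $\gamma_0+j\,p^\#$ avoids the residues $-\sigma_i \bmod q$, where the $\sigma_i$ are the partial sums of $s$. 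There are $\nu_q$ distinct such residues, and since $p^\#$ is invertible mod $q$, exactly $q-\nu_q$ of the copies survive.

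Step 1 (first appearance and uniqueness). Take the unique driving term of $s$ in $\pgap(11^\#)$, with $\gamma_0=107$ (or $1271$ for the mirror images). Follow its primorial coordinates stage by stage as tabulated in Figure~\ref{EngPrefixFig} and Figure~\ref{Engprimm}. At each stage $q\le 113$, verify that only one of the $q$ copies yields a driving term that can still fuse down to $s$, i.e.\ the count is exactly $1$. Also verify that the driving term is inadmissible (length $>459$) for $q<113$ and that the fusions at $q=113$ produce $s$ itself. For $113<q\le 131$, check that $s$ covers all residues but one mod $q$, i.e.\ $\nu_q=q-1$, so $q-\nu_q=1$. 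This step is a finite computation using the notebook in the GitHub repository. Note that $q-\nu_q\ge 1$ always holds because $s$ is admissible, so the unique instance persists.

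Step 2 (no other driving terms). This is the step I expect to be the main obstacle: showing that the only admissible source of $s$ in $\pgap(p^\#)$ for $p\ge 113$ is $s$ itself. I would argue by extremality. An admissible driving term for $s$ would be a constellation of length $J'>459$ and span $3242$ that fuses down to $s$. Such a constellation would have nonconvexity score larger than $2$ at span $3242$. Engelsma's exhaustive search reports no admissible constellation of length $460$ within span $3242$, so no such driving term exists. Failing that, I would check directly that every driving term present in $\pgap(113^\#)$ is inadmissible, so it contributes nothing at later stages. This must also cover the possibility of new driving terms appearing after $113$; that would need the Engelsma exhaustiveness claim to hold for all spans $\le 3242$.

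Step 3 (product formula). With no admissible driving terms, the population recursion from Step 1 becomes
$$n_{s,459}(q^\#) = (q-\nu_q)\, n_{s,459}(p^\#).$$
The value in $\pgap(131^\#)$ is $1$, and $q-\nu_q=1$ for $113<q\le 131$. Inducting from $131$ then gives $\prod_{137\le q\le p}(q-\nu_q)$. Finally, note that $\nu_q=J+1=460$ once $q>3242$, so the tail factors are $q-460$. This is consistent with the $\Theta\bigl(\prod_{p>J+1}(p-J-1)\bigr)$ growth established in \cite{FBHktuple}.
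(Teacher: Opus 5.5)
Your Steps~1 and~3 match the paper's proof. The computed unique prefixes of Figure~\ref{EngPrefixFig} give a single driving term at each stage, and it becomes $s$ itself in $\pgap(113^\#)$. The factors $q-\nu_q$ equal $1$ through $131$, and the multiplicative model then gives the product from $137$ on.

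The problem is Step~2, which as written would fail. Its fallback, that an inadmissible driving term in $\pgap(113^\#)$ ``contributes nothing at later stages,'' is false. An inadmissible driving term keeps feeding the population of $s$ through interior fusions until it is eliminated. That is exactly how $s$ is born in $\pgap(113^\#)$ from the inadmissible driving term in $\pgap(109^\#)$. So any extra driving-term instance beside $s$ in $\pgap(113^\#)$ or later, admissible or not, could produce new copies of $s$ at a later stage (for instance in a copy where exactly its extra interior points are fused). That would break the exact recursion $n_{s,459}(q^\#)=(q-\nu_q)\,n_{s,459}(p^\#)$.

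Consequently ``no admissible driving terms,'' the most your appeal to Engelsma's table could give, is not by itself enough for an exact formula. You would also be importing an exhaustiveness claim (no admissible constellation of length $460$ and span $3242$) that the paper does not establish and the lemma's proof does not use.

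The missing idea is descent, which also removes your worry about new driving terms appearing after $113$. Undo the fusions at $q$. Then an instance in $\pgap(q^\#)$ of $s$ or of one of its driving terms is an instance, in the preceding cycle, of a constellation with the same endpoints containing every point of $s$. That is again a driving term or $s$ itself. Equivalently, by the Chinese remainder theorem these instances in $\pgap(p^\#)$ correspond to the residues $\gamma_0 \bmod p^\#$ for which every $\gamma_0+\sigma_i$ is coprime to $p^\#$. So there are exactly $\prod_{q\le p}(q-\nu_q)$ of them.

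Your Step~1 shows this count is $1$ for all $p\le 131$, and that the lone instance is already $s$ at $p=113$. A copy of $s$ has no extra interior points, so it either remains $s$ or loses one of its own points. Hence for $p\ge 113$ every such instance is an instance of $s$, and $n_{s,459}(p^\#)=\prod_{q\le p}(q-\nu_q)=\prod_{137\le q\le p}(q-\nu_q)$, with no input about admissibility or exhaustiveness.

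The paper's remark that $s$ has no admissible driving terms then follows as a consequence. An admissible one would eventually occur in some cycle, and it would have to descend from the lone instance of $s$, which is impossible. Replace Step~2 with this observation.
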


\begin{proof}
The first parts of this lemma are computational results.
Figure~\ref{EngPrefixFig} shows the primorial coordinates for the $29$ counterexamples that have initial generator $\gamma_0=107$
in $\pgap(11^\#)$.  The other $29$ counterexamples are the reverse of these.  Figure~\ref{EngPrefixFig} provides the primorial coordinates
for the unique instances of $s$ in $\pgap(p^\#)$.

For primes $11 \le p < 113$ the constellations are represented by inadmissible driving terms, that is, by longer inadmissible constellations
which will produce $s$ under further interior fusions.  Figure~\ref{Engprimm} displays this evolution for the example $s_4$.

All $58$ of the $(459,3242)$-counterexamples first occur in $\pgap(113^\#)$.  Again see the summary data in Figure~\ref{EngPrefixFig}.  The
data and Python code are available in the GitHub repository.

Now the general population model from \cite{FBHktuple} becomes non-trivial.   For $p > 113$
$$n_{s,459}(p^\#) = \prod_{113 < q \le p} (q - \nu_q).$$
Since all of the $(459, 3242)$-counterexamples have unique images into $\pgap(131^\#)$, we can replace the bounds for that product
with $137 \le q \le p$.
\end{proof}

See Table~\ref{Eng4nadm} for the number of admissible instances $\bmod \, q$ for our
example $s_4$ and primes $11 \le q \le 499$.  
Similar data for the other $(459,3242)$-counterexamples are available in the GitHub repository.
We highlight the primes $q > 460$ (in blue) since these will fall within the second factor for the relative population model described below.

\begin{table}[htb]
\centering
\begin{tabular}{|rc||rc||rc||rc||rc||rc|}
\multicolumn{2}{c}{$p \hspace{0.2in} p-\nu_p$} & \multicolumn{2}{c}{$p \hspace{0.2in} p-\nu_p$} & \multicolumn{2}{c}{$p \hspace{0.2in} p-\nu_p$} & 
\multicolumn{2}{c}{$p \hspace{0.2in} p-\nu_p$} & \multicolumn{2}{c}{$p \hspace{0.2in} p-\nu_p$} & \multicolumn{2}{c}{$p \hspace{0.2in} p-\nu_p$} \\ \hline
{\em 13} & 1 & {\em 73} & 1 & 151 & 1 & 233 & 9 & 317 & 39 & 419 & 95 \\
{\em 17} & 1 & {\em 79} & 1 & 157 & 3 & 239 & 11 & 331 & 39 & 421 & 96 \\
{\em 19} &  1 & {\em 83} & 1 & 163 & 1 & 241 & 17 & 337 & 48 & 431 & 105 \\
{\em 23} &  1 & {\em 89} & 1 & 167 & 3 & 251 & 14 & 347 & 49 & 433 & 100 \\
{\em 29} &  1 & {\em 97} & 1 & 173 & 2 & 257 & 16 & 349 & 49 & 439 & 101 \\
{\em 31} & 1 & {\em 101} & 1 & 179 & 4 & 263 & 23 & 353 & 57 & 443 & 108 \\
{\em 37} & 1 & {\em 103} & 1 & 181 & 3 & 269 & 19 & 359 & 60 & 449 & 113 \\
{\em 41} & 1 & {\em 107} & 1 & 191 & 9 & 271 & 17 & 367 & 66 & 457 & 109 \\
{\em 43} & 1 & {\em 109} & 1 & 193 & 5 & 277 & 21 & 373 & 69 & \textcolor{blue}{461} & \textcolor{blue}{113} \\
{\em 47} & 1 & \textcolor{ForestGreen}{\bf 113} & \textcolor{ForestGreen}{1} & 197 & 5 & 281 & 28 & 379 & 66 & \textcolor{blue}{463} & \textcolor{blue}{131} \\
{\em 53} & 1 & 127 & 1 & 199 & 5 & 283 & 21 & 383 & 72 & \textcolor{blue}{467} & \textcolor{blue}{124} \\
{\em 59} & 1 & 131 & 1 & 211 & 11 & 293 & 30 & 389 & 83 & \textcolor{blue}{479} & \textcolor{blue}{134} \\
{\em 61} & 1 & 137 & 1 & 223 & 9 & 307 & 33 & 397 & 86 & \textcolor{blue}{487} & \textcolor{blue}{130} \\
{\em 67} & 1 & 139 & 2 & 227 & 13 & 311 & 37 & 401 & 82 & \textcolor{blue}{491} & \textcolor{blue}{141} \\
{\em 71} & 1 & 149 & 1 & 229 & 8 & 313 & 35 & 409 & 88 & \textcolor{blue}{499} & \textcolor{blue}{138} \\ \hline
\end{tabular}
 \caption{ \label{Eng4nadm}The number $p - \nu_p$ of admissible residues $\bmod p$ for the counterexample ${s_4 \in (459,3242)}$ over the 
 primes ${13 \le p \le 499}$.  The evolution begins with inadmissible driving terms in the cycles up through $\pgap(109^\#)$.
The constellation $s_4$ itself first appears in $\pgap(113^\#)$, and there continues to be a unique image of $s_4$ into $\pgap(137^\#)$.}
\end{table}



From \cite{FBHktuple} the asymptotic relative population of a constellation $s$ is the product of two factors.
The first factor is the product of admissible residues up through $k=J+1$.
\begin{equation}\label{Eqwinf}
 w_{s,J}(\infty) =  \prod_{q \le J+1} (q-\nu_q) \, \cdot \, \prod_{J+1 < q}\frac{q-\nu_q}{q-J-1} 
 \end{equation}

For the $(459,3242)$-counterexamples, $J=459$, and for $s_4$ this becomes
\begin{eqnarray*}
 w_{s_4,459}(\infty) & = & \prod_{q \le 460} (q-\nu_q) \, \cdot \, \prod_{460 < q \le 1621}\frac{q-\nu_q}{q-460}  \\
  & \approx & 1.907367 \, E72 \; \cdot \;  2.136352 \, E17 \; \approx \;  4.074808 \, E89
\end{eqnarray*}
Since $|s|=3242$, for every prime $q > 1621$ the factor $\frac{q-\nu_q}{q-460} = 1$.

\begin{table}[!h]
\centering
\begin{tabular}{rrrr|rrrr}
\# & $\prod_{q \le 460}$ & $\prod_{q > 460}$ & $w_s(\infty)$ & \# & $\prod_{q \le 460}$ & $\prod_{q > 460}$ & $w_s(\infty)$ \\ \hline
$0$ & $1.28E72$ & $2.47E17$ & $3.161E89$ &  $29$ & $1.16E73$ & $2.32E17$ & $2.691E90$ \\
$1$ & $3.96E72$ & $2.23E17$ & $8.810E89$ & $30$ & $2.10E73$ & $2.52E17$ & $5.293E90$ \\
$2$ & $4.22E72$ & $2.18E17$ & $9.203E89$ & $31$ & $1.76E73$ & $2.43E17$ & $4.281E90$ \\
$3$ & $2.60E72$ & $2.24E17$ & $5.805E89$ & $32$ & $3.48E73$ & $2.49E17$ & $8.657E90$ \\
$4$ & $1.91E72$ & $2.14E17$ & $4.075E89$ & $33$ & $2.92E73$ & $2.37E17$ & $6.922E90$ \\
$5$ & $1.46E72$ & $2.24E17$ & $3.281E89$ & $34$ & $2.55E73$ & $2.43E17$ & $6.188E90$ \\
$6$ & $6.58E72$ & $2.55E17$ & $1.674E90$ & $35$ & $3.04E73$ & $2.55E17$ & $7.741E90$ \\
$7$ & $3.54E72$ & $2.56E17$ & $9.058E89$ & $36$ & $2.59E73$ & $2.51E17$ & $6.508E90$ \\
$8$ & $7.73E72$ & $2.50E17$ & $1.935E90$ & $37$ & $2.11E73$ & $2.40E17$ & $5.063E90$ \\
$9$ & $3.99E72$ & $2.51E17$ & $1.004E90$ & $38$ & $1.21E73$ & $2.56E17$ & $3.085E90$ \\
$10$ & $4.58E72$ & $2.54E17$ & $1.161E90$ & $39$ & $1.00E73$ & $2.44E17$ & $2.443E90$ \\
$11$ & $9.12E72$ & $2.48E17$ & $2.263E90$ & $40$ & $8.19E72$ & $2.52E17$ & $2.067E90$ \\
$12$ & $9.76E72$ & $2.50E17$ & $2.441E90$ & $41$ & $7.16E72$ & $2.50E17$ & $1.791E90$ \\
$13$ & $1.01E73$ & $2.51E17$ & $2.536E90$ & $42$ & $1.30E73$ & $2.46E17$ & $3.190E90$ \\
$14$ & $1.16E73$ & $2.55E17$ & $2.950E90$ & $43$ & $1.16E73$ & $2.55E17$ & $2.950E90$ \\
$15$ & $1.30E73$ & $2.46E17$ & $3.190E90$ & $44$ & $1.01E73$ & $2.51E17$ & $2.536E90$ \\
$16$ & $7.16E72$ & $2.50E17$ & $1.791E90$ & $45$ & $9.76E72$ & $2.50E17$ & $2.441E90$ \\
$17$ & $8.19E72$ & $2.52E17$ & $2.067E90$ & $46$ & $9.12E72$ & $2.48E17$ & $2.263E90$ \\
$18$ & $1.00E73$ & $2.44E17$ & $2.443E90$ & $47$ & $4.58E72$ & $2.54E17$ & $1.161E90$ \\
$19$ & $1.21E73$ & $2.56E17$ & $3.085E90$ & $48$ & $3.99E72$ & $2.51E17$ & $1.004E90$ \\
$20$ & $2.11E73$ & $2.40E17$ & $5.063E90$ & $49$ & $7.73E72$ & $2.50E17$ & $1.935E90$ \\
$21$ & $2.59E73$ & $2.51E17$ & $6.508E90$ & $50$ & $3.54E72$ & $2.56E17$ & $9.058E89$ \\
$22$ & $3.04E73$ & $2.55E17$ & $7.741E90$ & $51$ & $6.58E72$ & $2.55E17$ & $1.674E90$ \\
$23$ & $2.55E73$ & $2.43E17$ & $6.188E90$ & $52$ & $1.46E72$ & $2.24E17$ & $3.281E89$ \\
$24$ & $2.92E73$ & $2.37E17$ & $6.922E90$ & $53$ & $1.91E72$ & $2.14E17$ & $4.075E89$ \\
$25$ & $3.48E73$ & $2.49E17$ & $8.657E90$ & $54$ & $2.60E72$ & $2.24E17$ & $5.805E89$ \\
$26$ & $1.76E73$ & $2.43E17$ & $4.281E90$ & $55$ & $4.22E72$ & $2.18E17$ & $9.203E89$ \\
$27$ & $2.10E73$ & $2.52E17$ & $5.293E90$ & $56$ & $3.96E72$ & $2.23E17$ & $8.810E89$ \\
$28$ & $1.16E73$ & $2.32E17$ & $2.691E90$ & $57$ & $1.28E72$ & $2.47E17$ & $3.161E89$ \\
\hline
\end{tabular}
\caption{ The asymptotic relative populations $w_s(\infty)$ for the $58$ Engelsma $(459,3242)$-counterexamples, and the
two factors of $w_s(\infty)$ from Equation~\ref{Eqwinf}.}
\end{table}

Note that $s_{25}$ and its reversal $s_{32}$ have asymptotic relative populations of $8.657E90$, $27.4$ times higher than
the asymptotic relative populations for $s_{0}$ and its reversal $s_{57}$.
Most of this variation occurs in the first factor, the number of instances for $q \le 460$.

\begin{figure}[hbt]
\centering
\includegraphics[width=5in]{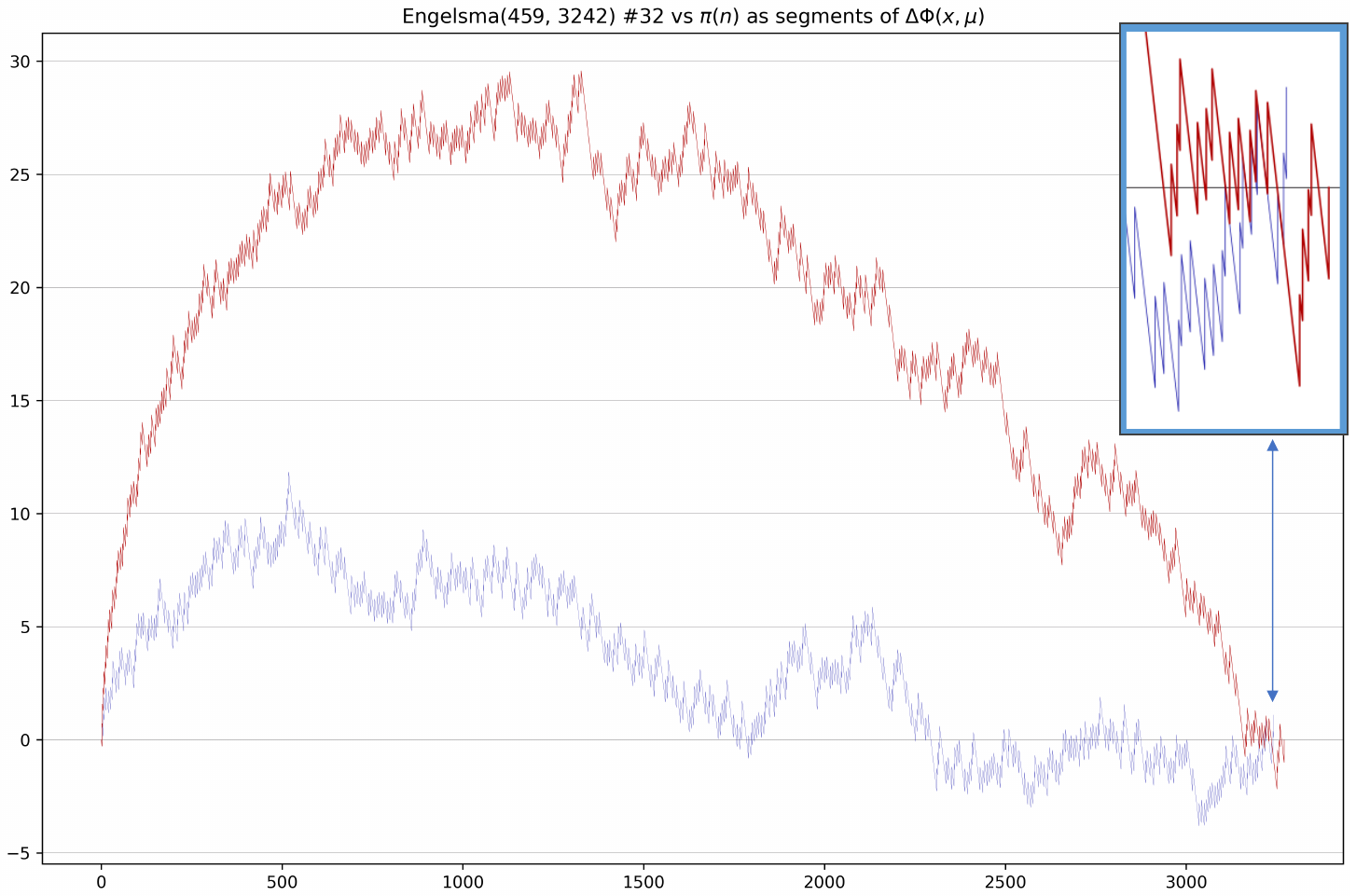}
\caption{\label{Eng32vpiFig} The $(459,3242)$-counterexample $s_{32}$ plotted (in blue) as a segment of 
$\Delta \Phi(x,\mu)$, compared to the
gaps between primes (in red) from $0$ to $x$. The small gaps in $\pi(x)$ create a huge early lead, but as larger gaps occur between primes,
it gives narrow constellations an opportunity to overtake it.  Again $s_{32}$ first crosses above the red graph for $(458,3240)$, and stays above for $(459,3242)$.}
\end{figure}

We graph the constellation $s_{32}$ against $\pi(x)$ in Figure~\ref{Eng32vpiFig}.  Not only does $s_{32}$ have the highest asymptotic
relative population among the $(459,3242)$-counterexamples, but the inset shows that as we reach the end of $s_{32}$, the truncated
constellation provides examples for $452 \le J \le 458$ in Engelsma's table, Figure~\ref{EngTblFig}, as well.  
We see the graph of $s_{32}$ jump onto the downward sloping segments
of $\pi(x)$.

\begin{table}
\begin{tabular}{r|cr|cr|r|}
 & \multicolumn{2}{c}{$\pi(x)$} & \multicolumn{2}{c}{$s_{32}$} & \\
 $J$ & $p_J$ & $g_J$ & $\gamma_J$ & $g_J$ & $J-\pi(\gamma_J)$ \\ \hline
 $450$ & $3181$ & $12$ & $3192$ & $6$ &  $-2$ \\
 $451$ & $3187$ & $6$ & $3198$ & $6$ & $-1$ \\
 $452$ & $3191$ & $4$ & $3200$ & $2$ & \textcolor{PineGreen}{$0$} \\
 $453$ & $3203$ & $12$ & $3210$ & $10$ & $-1$ \\
 $454$ & $3209$ & $6$ & $3212$ & $2$ & \textcolor{PineGreen}{$0$} \\
 $455$ & $3217$ & $8$ & $3218$ & $6$ & \textcolor{PineGreen}{$0$} \\
 $456$ & $3221$ & $4$ & $3222$ & $4$ & \textcolor{PineGreen}{$0$} \\
 $457$ & $3229$ & $8$ & $3236$ & $14$ & \textcolor{PineGreen}{$0$} \\
 $458$ & $3251$ & \textcolor{red}{$22$} & $3240$ & $4$ & \textcolor{NavyBlue}{$+1$} \\
 $459$ & $3253$ & $2$ & $3242$ & $2$ & \textcolor{NavyBlue}{$+2$} \\ \hline
\end{tabular}
\caption{\label{s32Tbl} The tail of $s_{32}$ provides an example of how narrow constellations identified by Engelsma for $452 \le J \le 459$
come to surpass $\pi(x)$.  The gap of $22$ from $p=3229$ to $3251$ finally provides the opportunity for $s_{32}$ to jump ahead.
See Figures~\ref{Eng32vpiFig} and~\ref{Eng4vpiFig}.}
\end{table}

With short constellations, say with $2 \le J \le 8$, we can easily identify a dozen reference constellations with which to compare
relative populations.  For the length $J=459$ we have very few meaningful reference constellations, to which to compare
the counterexamples.

One constellation that we do have at hand is a repetition of a gap.  
For a repetition of length $459$, the minimal gap ${g_{\rm rep}=457^\# \approx 2.19145\;E187}$.  A repetition of length $459$ of this gap
$g_{\rm rep}$ is a constellation of span $1.005874585188\;E190$ (vs $3242$ for the Engelsma counterexamples). 
The asymptotic relative population of this repetition is $1.992766199\;E186$ (vs $8.657E90$ for $s_{32}$).

Recall from our earlier work \cite{FBHPatterns, FBHktuple} that the relative populations can only be compared across admissible constellations 
of the same length $J$.

\subsection{Searching for surviving instances}
From the work above we know that all of the Engelsma $(459,3242)$-counterexamples persist across stages of Eratosthenes sieve, and
we know their asymptotic relative populations.  Where among the primes would we see an instance of any of them?

We undertook a breadth-first search across all $58$ counterexamples, exploring {\em all} admissible instances.  Looking at Table~\ref{Eng4nadm}
as one example, for $s_4$, we see that breadth-first searches quickly run into a computational barrier of superexponential growth.
The number of admissible instances of $s_j$ in the cycle $\pgap(p_k^\#)$ is
\begin{eqnarray*}
n_{s_j, 459}(p_k^\#) & = & \prod_{q \le p_k} (q - \nu_q(s)) \\
     & = & \prod_{137 \le q \le p_k} (q - \nu_q(s))
\end{eqnarray*}
Figure~\ref{EngPrefixFig} shows that each of the counterexamples $s_j$ has a unique occurrence up into $\pgap(131^\#)$.
For $s_4$, a breadth-first search over admissible instances becomes intractable long before we have even reached the transition
point $p_k > 460$.
 
\begin{table}
\begin{tabular}{|rcr||rcr||rcr|}
\multicolumn{1}{c}{$p$} & \multicolumn{1}{c}{$p-\nu_p$} & \multicolumn{1}{c}{$n_{s_4}(p^\#)$} & 
\multicolumn{1}{c}{$p$} & \multicolumn{1}{c}{$p-\nu_p$} & \multicolumn{1}{c}{$n_{s_4}(p^\#)$} & 
\multicolumn{1}{c}{$p$} & \multicolumn{1}{c}{$p-\nu_p$} & \multicolumn{1}{c}{$n_{s_4}(p^\#)$} \\ \hline
$137$ & $1$ & $1$ & $191$ & $9$ & $ 3888$ & $241$ & $17$ & $ 8.421\,E12$ \\
$139$ & $2$ & $2$ & $193$ & $5$ & $ 19440$ & $251$ & $14$ & $ 1.179\,E14$ \\
$149$ & $1$ & $2$ & $197$ & $5$ & $ 97200$ & $257$ & $16$ & $ 1.886\,E15$ \\
$151$ & $1$ & $2$ & $199$ & $5$ & $ 486000$ & $263$ & $23$ & $ 4.339\,E16$ \\
$157$ & $3$ & $6$ & $211$ & $11$ & $ 5346000$ & $269$ & $19$ & $ 8.244\,E17$ \\
$163$ & $1$ & $6$ & $223$ & $9$ & $ 4.811\,E7$ & $271$ & $17$ & $ 1.401\,E19$ \\
$167$ & $3$ & $18$ & $227$ & $13$ & $ 6.255\,E8$ & $277$ & $21$ & $ 2.943\,E20$ \\
$173$ & $2$ & $36$ & $229$ & $8$ & $ 5.004\,E9$ & $281$ & $28$ & $ 8.240\,E21$ \\
$179$ & $4$ & $144$ & $233$ & $9$ & $ 4.503\,E10$ & $283$ & $21$ & $ 1.730\,E23$ \\
$181$ & $3$ & $432$ & $239$ & $11$ & $ 4.954\,E11$ & $293$ & $30$ & $ 5.191\,E24$ \\ \hline
\end{tabular}
\caption{The number of admissible instances of the counterexample $s_4$ in the cycle $\pgap(p_k^\#)$, just after
$s_4$ has a unique instance in $\pgap(137^\#)$.}
\end{table}

\begin{table}
\begin{tabular}{rrr|rrr|rrr}
$\#$ & $m_k*p_{k-1}^\#$ & $\sim \gamma_0$ & $\#$ & $m_k*p_{k-1}^\#$ & $\sim \gamma_0$ & $\#$ & $m_k*p_{k-1}^\#$ & $\sim \gamma_0$ \\ \hline
$0$ & $\lil 2\cdot 193^\#$ & $\lil 5.330\,E77$ & $20$ & $\lil 60\cdot 191^\#$ & $\lil 6.250\,E76$ & $40$ & $\lil 1\cdot 193^\#$ & $\lil 3.588\,E77$ \\
$1$ & $\lil 6 \cdot 191^\#$ & $\lil 6.511\,E75$ & $21$ & $\lil 44 \cdot 191^\#$ & $\lil 4.617\,E76$ & $41$ & $\lil  2\cdot 193^\#$ & $\lil 5.380\,E77$ \\
$2$ & $\lil 7\cdot 191^\#$ & $\lil 7.960\,E75$ & $22$ & $\lil 80\cdot 181^\#$ & $\lil 4.335\,E74$ & $42$ & $\lil 11\cdot 191^\#$ & $\lil 1.186\,E76$ \\
$3$ & $\lil 87\cdot 191^\#$ & $\lil 8.973\,E76$ & $\bf 23$ & $\lil 18\cdot 181^\#$ & $\lil 9.733\,E73$ & $43$ & $\lil 58\cdot 191^\#$ & $\lil 5.984\,E76$ \\
$4$ & $\lil 6\cdot 193^\#$ & $\lil 1.331\,E78$ & $24$ & $\lil 1\cdot 193^\#$ & $\lil 2.494\,E77$ & $44$ & $\lil 28\cdot 191^\#$ & $\lil 2.959\,E76$ \\
$5$ & $\lil 5\cdot 193^\#$ & $\lil 1.005\,E78$ & $25$ & $\lil 64\cdot 191^\#$ & $\lil 6.684\,E76$ & $45$ & $\lil 24\cdot 191^\#$ & $\lil 2.546\,E76$ \\
$6$ & $\lil 6\cdot 191^\#$ & $\lil 7.043\,E75$ & $26$ & $\lil 32\cdot 191^\#$ & $\lil 3.342\,E76$ & $46$ & $\lil 56\cdot 191^\#$ & $\lil 5.791\,E76$ \\
$7$ & $\lil 68\cdot 191^\#$ & $\lil 7.085\,E76$ & $27$ & $\lil 7\cdot 191^\#$ & $\lil 8.009\,E75$ & $47$ & $\lil 1\cdot 193^\#$ & $\lil 2.397\,E77$ \\
$8$ & $\lil 1\cdot 193^\#$ & $\lil 3.078\,E77$ & $28$ & $\lil 98\cdot 191^\#$ & $\lil 1.013\,E77$ & $48$ & $\lil 3\cdot 193^\#$ & $\lil 6.891\,E77$ \\
$9$ & $\lil 62\cdot 191^\#$ & $\lil 6.424\,E76$ & $29$ & $\lil 157\cdot 191^\#$ & $\lil 1.620\,E77$ & $49$ & $\lil 24\cdot 191^\#$ & $\lil 2.543\,E76$ \\
$10$ & $\lil 2\cdot 193^\#$ & $\lil 5.574\,E77$ & $30$ & $\lil 19\cdot 191^\#$ & $\lil 1.989\,E76$ & $50$ & $\lil 1\cdot 193^\#$ & $\lil 3.018\,E77$ \\
$11$ & $\lil 77\cdot 191^\#$ & $\lil 7.966\,E76$ & $31$ & $\lil 54\cdot 191^\#$ & $\lil 5.644\,E76$ & $51$ & $\lil 85\cdot 191^\#$ & $\lil 8.857\,E76$ \\
$12$ & $\lil 126\cdot 191^\#$ & $\lil 1.301\,E77$ & $32$ & $\lil 7\cdot 191^\#$ & $\lil 7.755\,E75$ & $52$ & $\lil 1\cdot 193^\#$ & $\lil 2.029\,E77$ \\
$13$ & $\lil 114\cdot 191^\#$ & $\lil 1.176\,E77$ & $33$ & $\lil 21\cdot 191^\#$ & $\lil 2.242\,E76$ & $53$ & $\lil 1\cdot 193^\#$ & $\lil 3.444\,E77$ \\
$14$ & $\lil 30\cdot 191^\#$ & $\lil 3.124\,E76$ & $34$ & $\lil 1\cdot 191^\#$ & $\lil 1.424\,E75$ & $54$ & $\lil 2\cdot 193^\#$ & $\lil 5.008\,E77$ \\
$15$ & $\lil 37\cdot 191^\#$ & $\lil 3.912\,E76$ & $35$ & $\lil 82\cdot 181^\#$ & $\lil 4.428\,E74$ & $55$ & $\lil 101\cdot 191^\#$ & $\lil 1.041\,E77$ \\
$16$ & $\lil 97\cdot 191^\#$ & $\lil 1.004\,E77$ & $36$ & $\lil 29\cdot 191^\#$ & $\lil 3.012\,E76$ & $56$ & $\lil 149\cdot 191^\#$ & $\lil 1.545\,E77$ \\
$17$ & $\lil 61\cdot 191^\#$ & $\lil 6.312\,E76$ & $37$ & $\lil 5\cdot 191^\#$ & $\lil 5.807\,E75$ & $57$ & $\lil 110\cdot 191^\#$ & $\lil 1.136\,E77$ \\
$18$ & $\lil 167\cdot 191^\#$ & $\lil 1.731\,E77$ & $38$ & $\lil 43\cdot 191^\#$ & $\lil 4.535\,E76$ &  & & \\
$19$ & $\lil 8\cdot 191^\#$ & $\lil 8.483\,E75$ & $39$ & $\lil 13\cdot 191^\#$ & $\lil 1.351\,E76$ & & & \\ \hline
\end{tabular}
\caption{ \label{GammaTbl} Listed is the smallest initial generator for an admissible instance for each of the $58$ $(459,3242)$-counterexamples, from our breadth-first search. None of these are guaranteed to survive further stages of the sieve, but any surviving instance would have $\gamma_0$
at least this large.}
\end{table}

The following lemma characterizes surviving instances of admissible constellations.

\begin{lemma}
Let $s$ be an admissible constellation of length $J$.  In primorial coordinates, for each surviving instance of $s$ there is a $k$ 
such that the initial generator 
$$
\gamma_0(p_k) = \gamma_0 + m_1 \cdot p_0^\# + m_2 \cdot p_1^\# + \cdots + m_k p_{k-1}^\# 
$$
with $0\le m_j < p_j$ for $1\le j < k$ and $0 < m_k < p_k$, after which $m_j = 0$ for all $j > k$ until
$\gamma_0(p_k)$ falls within an horizon of survival $H(q)$ for some prime $q$.
\end{lemma}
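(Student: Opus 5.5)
The plan is to work directly from the structure of primorial coordinates and the cycles of gaps $\pgap(p^\#)$ as developed in \cite{FBHktuple}, together with the notion of a horizon of survival. I take the horizon $H(q)$ to be the interval up to $q^2$ (or its analogue for a constellation: an instance in $\pgap(q^\#)$ whose entire span $[\gamma_0,\gamma_0+|s|]$ lies below $p_{k+1}^2$, so that it is never removed by later sieving). The statement is essentially a normal form: every positive integer has a unique mixed-radix (primorial) expansion, and survival means that at some stage the instance sits inside the region where Eratosthenes sieve has finished.

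\textbf{Step 1: a surviving instance has a primorial expansion.} Let $\gamma_0^*$ be the initial generator of a surviving instance of $s$ among the primes. For each stage $p$ the reduction of $\gamma_0^*$ modulo $p^\#$ is an admissible instance of $s$ in $\pgap(p^\#)$. By uniqueness of primorial coordinates (mixed radix with bases $p_1,p_2,\dots$), we can write
$$\gamma_0^* = \gamma_0 + \sum_{j\ge1} m_j\, p_{j-1}^\#,\qquad 0\le m_j<p_j,$$
where $\gamma_0$ is the generator in the base cycle and only finitely many $m_j$ are nonzero. Let $k$ be the largest index with $m_k\neq 0$; then $0<m_k<p_k$, and $m_j=0$ for all $j>k$. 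This gives the displayed form with $\gamma_0(p_k)=\gamma_0^*$. If all $m_j$ vanish (so $k$ is the base stage), the same conclusion holds trivially.

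\textbf{Step 2: the trailing zeros eventually reach the horizon.} Since $\gamma_0(p_k)$ is a fixed integer and $m_j=0$ for $j>k$, its image in $\pgap(p_j^\#)$ for all $j>k$ is the same integer. Choose the smallest prime $q$ with $q^2>\gamma_0(p_k)+|s|$. Then $\gamma_0(p_k)$ lies within $H(q)$. By the horizon property of $\pgap(q^\#)$ (gaps below $q^2$ are gaps between actual primes), the instance, which is admissible at every stage and so has no entries removed by any prime up to $q$, consists of primes and therefore survives. Conversely, the lemma only asserts that zeros persist ``until'' the horizon is reached, which is exactly this.

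\textbf{Main obstacle.} The delicate step is Step 2's converse bookkeeping: I must justify that the entries of the instance in $\pgap(q^\#)$ below $q^2$ are genuinely prime. For this I need the horizon fact from \cite{FBHktuple}, and I need to check that the candidate $\gamma_0$ itself exceeds $q$ (or is handled as a small prime). I would handle this by noting that the primes removed by the sieve up to $q$ lie below $q$, while the instance lies beyond the prefix structure. The remaining point, that the base representative $\gamma_0$ is consistent with the coordinates fixed from $\pgap(11^\#)$ onward, is routine.
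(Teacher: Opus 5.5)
Your proof is correct and is essentially the paper's argument. Both take $k$ to be the last index with $m_k>0$ and treat the all-zero expansion as the degenerate case; the paper phrases this as the instance remaining in the first copy of $\pgap(p_j^\#)$ under the replication step, while you phrase it as the termination of the primorial (mixed-radix) expansion of the fixed integer $\gamma_0^*$.

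One caution: your remark that $\gamma_0^*\bmod p^\#$ is an admissible instance of $s$ at every stage is false at early stages. There the instance is carried by a longer, possibly inadmissible, driving term, as happens for the $(459,3242)$-counterexamples before $\pgap(113^\#)$. Step 1 never uses this remark, and the converse you discuss in Step 2 is not required by the lemma, so the proof stands.
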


\begin{proof}
In order to survive the sieve and be confirmed as a constellation among primes, the instance of $s$ must pass eventually into a
horizon of survival $H(q_0)=(q_0^2, q_1^2)$.
In the replication step R2 of the recursion \cite{FBHPatterns, FBHktuple}, at some point, say $p_{k+1}$, the instance must remain
in the first copy of $\pgap(	p_j^\#)$ across the last several stages of the sieve.  This requires $m_j=0$ for $j > k$, and $\gamma_0(p_k)$
to remain as an admissible instance of $s$ in $\pgap(p_j^\#)$.

Let $k$ be the last index for which $m_k > 0$.  We include here the degenerate case in which $\gamma_0$ is the last positive
coefficient and $m_j = 0$ for all $j > 0$.  Then $\gamma_0(p_k)$ is an admissible instance of $s$ among primes.
\end{proof}

As we switch from exhaustive breadth-first searches to opportunistic depth-first searches, we are looking for instances of $s$
that have long runs of $m_j=0$ as admissible primorial coefficients at the end of the search.  
As soon as an $m_j>0$ the instance has jumped a distance of $m_j \cdot p_{j-1}^\#$, 
beyond the first span of $\pgap(p_{j-1}^\#)$.

The probability of finding a surviving instance of a $(459,3242)$-counterexample is quite small.  There are unique instances of
all $58$ of these counterexamples in $\pgap(131^\#)$.  Of the $5.259\,E50$ gaps in this cycle, only $58$ start one of these counterexamples.
We have seen above that no instances survive before $9.733\,E73$.

We may identify an occasional surviving instance of one of these $(459,3242)$-counterexamples, but we do not expect their density
in quadratic intervals $[n^2,(n+1)^2]$ to steadily increase until the average gap size $\mu$ satisfies
$$ \mu > 460$$
This rough estimate is derived from Lemma 3.2 of \cite{FBHquad}.  By Merten's Third Theorem we estimate that $\mu > 460$ when
$p > 1.465\,E112$.


\section{Conclusion}
We have extended our studies of the $k$-tuple conjecture \cite{FBHktuple, FBHPatterns}, applying those methods to the 
$58$ possible counterexamples to the convexity conjecture of length $J=459$ and span $|s|=3242$, identified by Engelsma.

We show the evolution of each of these $58$ counterexamples from inadmissible driving terms in the cycle $\pgap(11^\#)$,
through their first appearance in $\pgap(113^\#)$ and continue an exhaustive breadth-first search up through $\pgap(211^\#)$.
We record the asymptotic relative population $w_s(\infty)$ for each one, and note the smallest instance (initial generator) $\gamma_0$
for each.  No instance of any of the $58$ $(459,3242)$-counterexamples occur among primes before $9.7\,E73$.

\bibliographystyle{alpha}
\bibliography{primes2024}

\end{document}